\title[Indecomposable groups]{Freely indecomposable almost free groups with free abelianization}
\author{Samuel M. Corson}
\theoremstyle{definition}\newtheorem{theorem}{Theorem}
\theoremstyle{definition}
\numberwithin{theorem}{section}
\theoremstyle{definition}
\theoremstyle{definition}
\theoremstyle{definition}
\theoremstyle{definition}
\theoremstyle{definition}
\theoremstyle{definition}\newtheorem{remark}[theorem]{Remark}
\theoremstyle{definition}
\theoremstyle{definition}\newtheorem{lemma}[theorem]{Lemma}
\theoremstyle{definition}
\theoremstyle{definition}
\theoremstyle{definition}\newtheorem{definitions}[theorem]{Definitions}
\theoremstyle{definition}
\theoremstyle{definition}
\theoremstyle{definition}\newtheorem{construction}[theorem]{Construction}
\newcommand{\Len}{\operatorname{Len}}
\begin{document}

\address{Ikerbasque- Basque Foundation for Science and Matematika Saila, UPV/EHU, Sarriena S/N, 48940, Leioa - Bizkaia, Spain}

\email{sammyc973@gmail.com}
\keywords{free group, almost free group, indecomposable group}
\subjclass[2010]{Primary 03E75, 20E05; Secondary 20E06}
\thanks{This work was supported by ERC grant PCG-336983}

\begin{abstract}  For certain uncountable cardinals $\kappa$ we produce a group of cardinality $\kappa$ which is freely indecomposable, strongly $\kappa$-free, and whose abelianization is free abelian of rank $\kappa$.  The construction takes place in G\"odel's constructible universe $L$.  This strengthens an earlier result of Eklof and Mekler \cite{EM}.
\end{abstract}

\maketitle

\begin{section}{Introduction}

We produce examples of groups which exhibit some properties enjoyed by free groups but which in other ways are very far from being free.  We recall some definitions before stating the main result.  Given a group $G$ and subgroup $H\leq G$ we say $H$ is a \emph{free factor of $G$} provided there exists another subgroup $K\leq G$ such that $G= H * K$ in the natural way (that is- the map $H * K \rightarrow G$ induced by the inclusions of $H$ and $K$ is an isomorphism).  We call such a writing $G = H * K$ a \emph{free decomposition of $G$} and say that $G$ is \emph{freely indecomposable} provided there does not exist a free decomposition of $G$ via two nontrivial free factors.

Given a cardinal $\kappa$ we say $G$ is \emph{$\kappa$-free} if each subgroup of $G$ generated by fewer than $\kappa$ elements is a free group.  Historically a $\kappa$-free group of cardinality $\kappa$ is called \emph{almost free} \cite{H1}.  By the theorem of Nielsen and Schreier every free group is $\kappa$-free for every cardinal $\kappa$.  A subgroup $H$ of a $\kappa$-free group $G$ is \emph{$\kappa$-pure} if $H$ is a free factor of any subgroup $\langle H\cup X\rangle$ where $X \subseteq G$ is of cardinality $<\kappa$.  A $\kappa$-free group $G$ is \emph{strongly $\kappa$-free} provided each subset $X \subseteq G$ with $|X|<\kappa$ is included in a $\kappa$-pure subgroup of $G$ generated by fewer than $\kappa$ elements.

Let ZFC denote the Zermelo-Fraenkel axioms of set theory including the axiom of choice, and V = L denote the assertion that every set is constructible.  The theory ZFC + V = L is consistent provided ZFC is consistent \cite{G}.  The set theoretic concepts in the following statement will be reviewed in Section \ref{Proof} but the reader can, for example, let $\kappa$ be any uncountable succesor cardinal (e.g. $\aleph_1$, $\aleph_2$, $\aleph_{\omega +1}$):

\begin{theorem} \label{maintheorem} (ZFC + V = L)  Let $\kappa$ be uncountable regular cardinal that is not weakly compact.  There exists a group $G$ of cardinality $\kappa$ for which

\begin{enumerate}

\item $G$ is freely indecomposable;

\item $G$ is strongly $\kappa$-free;

\item the abelianization $G/G'$ is free abelian of cardinality $\kappa$.
\end{enumerate}
\end{theorem}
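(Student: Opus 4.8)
The plan is to build $G$ as the union of a continuous increasing chain of free groups and to force the three conclusions through a single transfinite recursion steered by combinatorial guessing. \textbf{Set-theoretic scaffolding.} Since we work in $L$ and $\kappa$ is regular uncountable but not weakly compact, I would first fix a stationary set $E\subseteq\{\delta<\kappa:\operatorname{cf}(\delta)=\omega\}$ that does not reflect (this is where non-weak-compactness is essential) together with a $\diamondsuit_\kappa(E)$-sequence; constructibility supplies both. For each $\delta\in E$ fix once and for all a strictly increasing sequence $(\delta_n)_{n<\omega}$ cofinal in $\delta$. The diamond sequence will be used to guess, at each $\delta\in E$, a pair of subgroups coding the restriction to the $\delta$-th stage of a hypothetical free decomposition of $G$.

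\textbf{The filtration.} I would construct $G=\bigcup_{\alpha<\kappa}G_\alpha$ so that each $G_\alpha$ is a \emph{free} group of rank $|\alpha|+\aleph_0$ on an explicitly named basis that grows continuously, the chain is continuous at limits, and for $\alpha\notin E$ the step $G_\alpha\hookrightarrow G_{\alpha+1}$ is a free extension $G_{\alpha+1}=G_\alpha*F$ with $F$ free. Two features are then immediate: every subgroup of $G$ generated by fewer than $\kappa$ elements lies in some $G_\alpha$ and is therefore free by Nielsen--Schreier, so $G$ is $\kappa$-free; and, because $E$ does not reflect, for every limit $\alpha\notin E$ the trace $E\cap\alpha$ is nonstationary in $\alpha$, which lets me thread a club of free-factor stages below $\alpha$ and conclude that $G_\alpha$ is $\kappa$-pure. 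As the limit points $\alpha\notin E$ are cofinal in $\kappa$, every $<\kappa$-sized subset is contained in such a $\kappa$-pure $G_\alpha$, giving \emph{strong} $\kappa$-freeness (exactly as in \cite{EM}).

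\textbf{The gadget at $\delta\in E$.} All the difficulty is concentrated in the successor step over $\delta\in E$, where a single gadget must do four apparently incompatible jobs at once. It must (i) keep $G_{\delta+1}$ free, so that $\kappa$-freeness survives; (ii) make $G_\delta$ fail to be a free factor of $G$, so that $E$ stationary forces genuine non-freeness; (iii) consist only of relators lying in the commutator subgroup, so that the abelianization is undisturbed; and (iv) be chosen, using the diamond guess, so as to be incompatible with the guessed partial decomposition. Concretely I would adjoin new generators $t_n$ tied to $G_\delta$ along the cofinal sequence by relations of the shape $t_{n+1}=t_n w_n$ with each $w_n\in G_{\delta_n}'$ a commutator; such relators die in $G/G'$, so the abelianization remains free abelian on the original basis and hence free of rank $\kappa$, securing (iii). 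To obstruct splittings I would, whenever the diamond offers a \emph{nontrivial} decomposition $G_\delta=A_\delta*B_\delta$, choose the $w_n$ to straddle the partition, e.g. built from commutators $[a,b]$ with $a\in A_\delta$ and $b\in B_\delta$, so that no free-product decomposition of $G$ restricting to $A_\delta*B_\delta$ can absorb the $t_n$.

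\textbf{Verification of indecomposability and the main obstacle.} Granting a suitable gadget, free indecomposability follows by the usual diamond diagonalization: a hypothetical nontrivial decomposition $G=A*B$ restricts, on a club of stages, to a free decomposition of $G_\alpha$ with both parts nontrivial, the diamond catches this restriction at stationarily many $\delta\in E$, and at any such $\delta$ the gadget was built to preclude the restriction extending to $G$, a contradiction. The crux, and the step I expect to resist routine treatment, is item (ii) together with (i): producing non-freeness that is \emph{invisible in the abelianization}. In the classical abelian and near-abelian constructions the obstruction is supplied by divisibility, which is precisely what one may not use here. I would therefore control the gadget by a length function $\Len$ on $G$, measuring the complexity of elements relative to the named bases, and argue that the elements $t_n$ cannot be rewritten with bounded length over $G_\delta$, so $G_\delta$ is not a free factor of $G_{\delta+1}$, while the same bookkeeping certifies that $G_{\delta+1}$ is nonetheless free and that the straddling relators block every decomposition. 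Designing $\Len$ and the commutators $w_n$ so that all four requirements hold simultaneously is the heart of the argument.
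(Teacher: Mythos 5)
Your set-theoretic scaffolding and overall architecture (a continuous filtration by free groups, a non-reflecting stationary set $E$ of cofinality-$\omega$ points, a $\diamondsuit_\kappa(E)$-sequence guessing the trace of a hypothetical decomposition, strong $\kappa$-freeness from non-reflection, and indecomposability by catching the decomposition at a stage in $E$) coincide with the paper's. The gap is precisely where you predict it, and your proposed gadget does not close it. If you adjoin new generators $t_n$ subject to $t_{n+1}=t_nw_n$ with $w_n\in G_{\delta_n}'$, then $t_{n+1}=t_0w_0w_1\cdots w_n$, so every $t_n$ lies in $\langle G_\delta\cup\{t_0\}\rangle$ and the presentation is Tietze-equivalent to $G_{\delta+1}=G_\delta*\langle t_0\rangle$: the group $G_\delta$ remains a free factor and no non-freeness is ever created. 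The relations must run the other way. In Construction \ref{interestinginductioncase} one keeps a distinguished countable family $t_0,t_1,\dots$ of \emph{old} basis elements of $G_\delta$ (chosen cofinally along $(\delta_n)$) and rewrites them in terms of genuinely new free generators $z_n$ of $G_{\delta+1}$ via $t_n=yz_{n+1}^{-1}z_nz_{n+1}$. Then $G_{\delta+1}$ is free on the new basis, the normal closure of $G_\delta$ is all of $G_{\delta+1}$ while $G_\delta$ is a proper subgroup, so $G_\delta$ is not a free factor; yet each truncation $\langle X,Y,y,t_0,\dots,t_{n-1}\rangle$ still is one (Lemma \ref{basicfacts}), which is what keeps $\kappa$-freeness and $\kappa$-purity alive through later stages.

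Two further corrections follow from this. The abelianization is not preserved by making relators commutators; it is preserved because each $z_n$ is conjugate to $y^{-1}t_n$, hence congruent modulo the commutator subgroup to an element of the old group, so $G_\delta/G_\delta'\to G_{\delta+1}/G_{\delta+1}'$ is an isomorphism at stages in $E$ and the abelianization grows (freely) only at the $\kappa$ many ordinary free-product stages. And the obstruction to a guessed decomposition $A_\delta*B_\delta$ is not achieved by straddling commutators: the diamond is used to arrange that $y$ lies in the $A$-side and the $t_n$ in the $B$-side, so that $y^{-1}t_n=z_{n+1}^{-1}z_nz_{n+1}$ is cyclically reduced of length $2$ in $A*B$; a normal-form analysis then forces $\Len(z_n)\ge\Len(z_{n+1})+1$ for every $n$, an impossible infinite descent (Lemma \ref{spoilfreeprod}). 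That length argument, together with the Nielsen-basis verification that the rewritten generating set is free, is the actual mathematical content your proposal still lacks.
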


The hypotheses on the cardinal $\kappa$ cannot be dropped since a $\kappa$-free group of cardinality $\kappa$ must be free when $\kappa$ is singular or weakly compact (see respectively \cite{Sh} and \cite{E}).  A group as in the conclusion seems unusual since on a local level it is free, on a global level it is quite unfree and in fact indecomposable, but the abelianization is as decomposable as possible.  Theorem \ref{maintheorem} minus condition (3) was proved in \cite{EM} and the construction apparently does not have free abelianization; indeed, the proof that their groups are freely indecomposable involves abelianizing.  A non-free $\aleph_1$-free group of cardinality $\aleph_1$ which abelianizes to a free abelian group was produced by Bitton \cite{B} using only ZFC, and Theorem \ref{maintheorem} can be considered a constructible universe strengthening of his result.  The first construction of a non-free almost free group of cardinality $\aleph_1$ was given by Higman \cite{H1} without any extra set theoretic assumptions; a strongly $\aleph_1$-free group of cardinality $\aleph_1$ produced only from ZFC was given by Mekler \cite{M}.  The reader can find other results related to almost free (abelian) groups in such works as \cite{EMLarge}, \cite{MagSh}, \cite{Esmall}.

We note that is is not possible to produce a group $G$ of cardinality $\geq \kappa$ whose every subgroup of cardinality $\kappa$ satisfies conditions (1)-(3) of Theorem \ref{maintheorem}.  This is because for every uncountable locally free group $G$ there exists a free subgroup $H\leq G$ with $|H| = |G|$ \cite[Theorem 1.1]{N}.  Finally, we mention that the construction used in proving Theorem \ref{maintheorem} allows one, as in \cite{EM}, to construct $2^{\kappa}$ many pairwise non-isomorphic groups of this description using \cite{So}.
\end{section}

\begin{section}{Some Group Theoretic Lemmas}

The following appears as Lemma 1 in \cite{H1}:

\begin{lemma} \label{Kurosh}  If $K$ is a free factor of $G$ and $H$ is a subgroup of $G$ then $H \cap K$ is a free factor of $H$.
\end{lemma}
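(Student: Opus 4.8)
The plan is to deduce the statement directly from the Kurosh subgroup theorem. By hypothesis we may write $G = K * L$ for some subgroup $L \leq G$. The Kurosh subgroup theorem describes an arbitrary subgroup $H$ of a free product in terms of conjugates of the factors: choosing sets $D_K$ and $D_L$ of representatives for the double coset spaces $H\backslash G/K$ and $H\backslash G/L$ respectively, one obtains a decomposition
$$H = F * \Bigl( \ast_{d \in D_K} (H \cap dKd^{-1}) \Bigr) * \Bigl( \ast_{d \in D_L} (H \cap dLd^{-1}) \Bigr),$$
where $F$ is a free group. So the first step is to invoke this theorem for the given free decomposition $G = K*L$.

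The key observation is that the double coset representatives may be chosen freely, and in particular the identity element $e$ may be taken as the representative of the double coset $HeK = HK$ inside $D_K$. With this choice the corresponding factor is $H \cap eKe^{-1} = H \cap K$, so $H \cap K$ occurs verbatim as one of the free factors displayed above. Collecting all of the remaining factors, namely the free group $F$, the factors $H \cap dKd^{-1}$ for $e \neq d \in D_K$, and all the factors $H \cap dLd^{-1}$ for $d \in D_L$, into a single subgroup $M$, we obtain $H = (H \cap K) * M$. This exhibits $H \cap K$ as a free factor of $H$, which is precisely the assertion.

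The only point requiring care is the bookkeeping in the Kurosh decomposition: one must check that the double coset containing $e$ genuinely contributes the untwisted factor $H \cap K$ rather than a proper conjugate, which is immediate once $e$ is selected as its representative, and that the remaining pieces assemble into a bona fide internal free product complementing $H \cap K$. Both facts are built into the conclusion of the Kurosh subgroup theorem, so beyond a correct invocation of that theorem there is no substantive obstacle. I would expect the entire argument to occupy only a few lines.
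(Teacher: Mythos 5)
Your argument is correct: the paper gives no proof of this lemma, simply citing it as Lemma 1 of Higman's \emph{Almost free groups}, and the justification there is exactly this application of the Kurosh subgroup theorem. The one point you flag --- that the identity may be taken as the representative of the double coset $HK$ --- is harmless in any formulation of that theorem, since replacing $e$ by another representative of the same double coset only conjugates the factor $H \cap K$ by an element of $H$, and an $H$-conjugate of a free factor of $H$ is again a free factor of $H$.
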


We will use the following construction in our proof of Theorem \ref{maintheorem}:

\begin{construction}\label{interestinginductioncase}  Suppose that we have a free group $F_a$ with free decomposition $F = F_0 * F_1 *  F_{2, a}$ with $F_1$ nontrivial and $F_2$ freely generated by $\{t_n\}_{n\in \omega}$.  Let $F_b$ be a free group with free decomposition $F_b = F_0 * F_1 * F_{2, b}$ where $F_{2, b}$ is freely generated by $\{z_n\}_{n\in \omega}$.  Let $y$ be an element of a free generating set for $F_1$.  Define $\phi: F_a \rightarrow F_b$ so that $\phi\upharpoonright F_0*F_1$ is identity and $\phi(t_n) = yz_{n+1}^{-1}z_nz_{n+1}$.  
\end{construction}

Property (iv) of the following lemma compares to \cite[Lemma 2.9 (3) \& (4)]{B}:

\begin{lemma}\label{basicfacts}  Let $F_{2, a, n} = \langle t_0, \ldots, t_{n-1}\rangle$.  The map $\phi$ satisfies the following:

\begin{enumerate}[(i)]

\item $\phi$ is a monomorphism;

\item $z_n\notin \phi(F_a)$ for all $n\in \omega$;

\item $\phi(F_a)$ is not a free factor of $F_b$ but $\phi(F_0*F_1*F_{2, a, n})$ is a free factor for every $n\in \omega$;

\item the equality $(\phi(F_a))' = \phi(F_a)\cap F_b'$ holds and the natural induced map $\overline{\phi}: F_a/F_a' \rightarrow F_b/F_b'$ is an isomorphism.
\end{enumerate}
\end{lemma}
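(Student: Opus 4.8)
The plan is to establish first the positive half of (iii) — that each $\phi(F_0 * F_1 * F_{2,a,n})$ is a free factor of $F_b$ — since the remaining assertions follow from it with little extra work. Write $X_0, X_1$ for free bases of $F_0, F_1$ with $y \in X_1$, and set $w_n = \phi(t_n) = y z_{n+1}^{-1} z_n z_{n+1}$. The key observation is that the substitution is reversible one index at a time: from $w_k = y z_{k+1}^{-1} z_k z_{k+1}$ one recovers $z_k = z_{k+1} y^{-1} w_k z_{k+1}^{-1}$. Reading this downward from $z_n$ shows that $\{y, w_0, \ldots, w_{n-1}, z_n\}$ and $\{y, z_0, \ldots, z_n\}$ generate the same subgroup $R_n \le F_b$; since $R_n$ is free of rank $n+2$ and both sets have $n+2$ elements, Hopficity of finitely generated free groups makes each a basis of $R_n$. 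As $R_n$ is a free factor of $F_b$ (its complement is generated by the remaining standard basis elements), it follows that $B_n := X_0 \cup X_1 \cup \{w_0, \ldots, w_{n-1}\} \cup \{z_n, z_{n+1}, \ldots\}$ is a free basis of $F_b$, so that $\phi(F_0 * F_1 * F_{2,a,n}) = \langle X_0 \cup X_1 \cup \{w_0, \ldots, w_{n-1}\}\rangle$ is the free factor on a sub-basis, with complement $C_n := \langle z_n, z_{n+1}, \ldots\rangle$. Because $F_a$ is the increasing union of the $F_0 * F_1 * F_{2,a,n}$ and each restriction $\phi\upharpoonright F_0 * F_1 * F_{2,a,n}$ carries a basis bijectively to a basis, each such restriction is injective, giving (i) at once.

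For (iv) I would compute $\overline\phi$ directly on free generators. It fixes the classes of $X_0 \cup X_1$ and sends the class of $t_n$ to that of $w_n$, which abelianizes to $\bar y + \bar z_n$ since the two occurrences of $z_{n+1}^{\pm 1}$ cancel. In the free abelian group $F_b/F_b'$ on $X_0 \cup X_1 \cup \{z_n\}$ one checks that $\{\bar x : x \in X_0 \cup X_1\} \cup \{\bar y + \bar z_n : n \in \omega\}$ is again a basis (it differs from the standard one by a unipotent change of coordinates), so $\overline\phi$ is an isomorphism. The identity $(\phi F_a)' = \phi(F_a')$ holds for any homomorphism, giving $(\phi F_a)' \subseteq \phi(F_a) \cap F_b'$; conversely if $\phi(h) \in F_b'$ then $\overline\phi(\bar h) = 0$, and injectivity of $\overline\phi$ forces $h \in F_a'$, whence $\phi(h) \in \phi(F_a')$.

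For (ii) I would test membership in the free factor using free-product normal forms in $F_b = \phi(F_0 * F_1 * F_{2,a,n}) * C_n$. When $n \le m$ the element $z_m$ lies in $C_n \setminus \{1\}$ and so avoids the other factor. When $n > m$ the recursion $z_m = z_{m+1} y^{-1} w_m z_{m+1}^{-1}$ must be unwound: starting from $z_{n-1} = z_n (y^{-1} w_{n-1}) z_n^{-1}$, a downward induction shows that each $z_m$ with $m < n$ has a normal form that begins and ends with a nontrivial $C_n$-syllable separated by the $\phi(F_0*F_1*F_{2,a,n})$-syllable $y^{-1}w_m$, hence has syllable length at least three. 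In particular $z_m \notin \phi(F_0 * F_1 * F_{2,a,n})$ for every $n$, and as $\phi(F_a) = \bigcup_n \phi(F_0*F_1*F_{2,a,n})$ this yields $z_m \notin \phi(F_a)$. This syllable bookkeeping is the one genuinely delicate point, and I expect it to be the main obstacle: the obstruction lives ``at infinity'' and the naive invariants fail, since abelianizing cannot detect it (by (iv) the class $\bar z_m$ does lie in the image of $\overline\phi$) and the retraction $F_b \to C_n$ kills $z_m$ for $m < n$, seeing only the normal closure of the factor rather than the factor itself.

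Finally, the negative half of (iii) follows by combining (ii) and (iv). Suppose toward a contradiction that $F_b = \phi(F_a) * D$. Abelianizing yields $F_b/F_b' = q(\phi(F_a)) \oplus \bar D$, where $q$ is the abelianization map; but $q(\phi(F_a)) = \overline\phi(F_a/F_a')$ is all of $F_b/F_b'$ by (iv), so $\bar D = 0$ and hence $D = 1$ since $D$ is free. Then $\phi(F_a) = F_b$, contradicting $z_0 \notin \phi(F_a)$ from (ii). Thus $\phi(F_a)$ is not a free factor of $F_b$, completing the plan.
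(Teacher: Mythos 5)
Your proposal is correct, but its architecture differs from the paper's in a way worth noting. The paper organizes everything around the observation that $X\cup Y\cup\{y\}\cup\{z_{n+1}^{-1}z_nz_{n+1}\}_{n\in\omega}$ satisfies the Nielsen property: this yields (i) and (ii) immediately, and the ``unreducing letter'' version of the exponent-sum criterion gives (iv); the negative half of (iii) is handled by noting that the normal closure of $\phi(F_a)$ is all of $F_b$ (each $z_n$ is conjugate into $\phi(F_a)$), so a proper subgroup with full normal closure cannot be a free factor. You avoid Nielsen reduction entirely and instead bootstrap from the finite-stage decompositions $F_b=\phi(F_0*F_1*F_{2,a,n})*\langle z_n,z_{n+1},\ldots\rangle$, which you establish by the same Hopficity argument the paper uses for the positive half of (iii); you then get (i) by taking the union over $n$, (ii) by syllable-length bookkeeping in these free products (your downward induction is sound: no cancellation occurs at the junctions since the syllables alternate between the two factors, and $y^{-1}w_m$ is a nontrivial element of the finite-stage factor), and the negative half of (iii) by abelianizing a putative decomposition $F_b=\phi(F_a)*D$ and invoking (ii) and (iv). Your treatment of (iv) -- computing $\overline\phi$ on bases, noting the change of coordinates is unipotent, and deducing $(\phi(F_a))'=\phi(F_a)\cap F_b'$ from $\phi(F_a')=(\phi(F_a))'$ plus injectivity of $\overline\phi$ -- is cleaner than the paper's word-combinatorial argument. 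The trade-off is that the paper's single Nielsen-property verification does more work up front, while your route stays within elementary free-product normal form theory and the Hopfian property, at the cost of the inductive syllable analysis in (ii).
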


\begin{proof}  Fix a possibly empty set of free generators $X$ for $F_0$ and a possibly empty set $Y$ such that the disjoint union $Y \sqcup \{y\}$ freely generates $F_1$.   Now $\phi(F_a)$ is the subgroup $\langle X \cup Y\cup \{y\} \cup \{yz_{n+1}^{-1}z_nz_{n+1}\}_{n\in \omega}\rangle = \langle X \cup Y\cup \{y\} \cup \{z_{n+1}^{-1}z_nz_{n+1}\}_{n\in \omega}\rangle$ and the generators  $X \cup Y\cup \{y\} \cup \{z_{n+1}^{-1}z_nz_{n+1}\}_{n\in \omega}$ freely generate $\phi(F)$ since they satisfy the Nielsen property (see \cite[I.2]{LS} and \cite[Example 2.8 (iii)]{H2}).  The set $X \cup Y\cup \{y\}\cup \{y^{-1}t_n\}_{n\in \omega}$ is a free generating set for $F_a$ and maps bijectively under $\phi$ to the free generating set $X \cup Y\cup \{y\} \cup \{z_{n+1}^{-1}z_nz_{n+1}\}_{n\in \omega}$ for $\phi(F_a)$.  Thus $\phi$ is a monomorphism and we have (i).  Moreover since $X \cup Y\cup \{y\} \cup \{z_{n+1}^{-1}z_nz_{n+1}\}_{n\in \omega}$ satisfies the Nielsen property it is clear that $z_n\notin \phi(F_a)$ for all $n\in \omega$ and we have (ii).

For (iii) we notice that $\langle\langle \phi(F_a)\rangle\rangle = F_b$ since the free generators listed for $F_b$ are conjugate in $F_b$ to free generators of $\phi(F_a)$.  Since $\phi(F_a)$ is a proper subgroup of $F_b$ this means that $\phi(F_a)$ cannot be a free factor of $F_b$.  On the other hand we have $\phi(F_0*F_1*F_{2, a, n})$ generated by $X \cup Y\cup \{y\} \cup \{yz_1^{-1}z_0z_1, \ldots, yz_n^{-1}z_{n-1}z_n\}$ and we claim that $X \cup Y\cup \{y\} \cup \{yz_1^{-1}z_0z_1, \ldots, yz_n^{-1}z_{n-1}z_n\} \cup \{z_n, z_{n+1}, \ldots\}$ is a free generating set for $F_b$.  It is plain that $\langle \{y\}\cup \{yz_1^{-1}z_0z_1, \ldots, yz_n^{-1}z_{n-1}z_n\} \cup \{z_n\}\rangle = \langle \{y\} \cup \{z_0, \ldots, z_n\}\rangle$ and since finitely generated free groups are Hopfian we get that $\{y\}\cup \{yz_1^{-1}z_0z_1, \ldots, yz_n^{-1}z_{n-1}z_n\} \cup \{z_n\}$ is a free generating set of the free factor $\langle \{y\} \cup \{z_0, \ldots, z_n\}\rangle$ of $F_b$.  Thus indeed $X \cup Y\cup \{y\} \cup \{yz_1^{-1}z_0z_1, \ldots, yz_n^{-1}z_{n-1}z_n\} \cup \{z_n, z_{n+1}, \ldots\}$ is a free generating set for $F_b$ and so $\phi(F_0*F_1*F_{2, a,  n}) = \langle X \cup Y\cup \{y\} \cup \{yz_1^{-1}z_0z_1, \ldots, yz_n^{-1}z_{n-1}z_n\}\rangle$ is a free factor of $F_b$ and we have shown (iii).

For condition (iv) certainly the inclusion $(\phi(F_a))' \subseteq \phi(F_a) \cap F_b'$ holds.  Moreover a word $w$ in $(X \cup Y\cup \{y\}\cup \{z_n\}_{n\in \omega})^{\pm 1}$ represents an element of $F_b'$ if and only if the sum of the exponents of each element in $X \cup Y\cup \{y\}\cup \{z_n\}_{n\in \omega}$ is $0$.  By treating each element of $\{z_{n+1}^{-1}z_nz_{n+1}\}_{n\in \omega}$ as an unreducing letter, a word in $(X \cup Y\cup \{y\} \cup \{z_{n+1}^{-1}z_nz_{n+1}\}_{n\in \omega})^{\pm 1}$ represents an element of $(\phi(F_a))'$ if and only if the sum of the exponents of each element of $X \cup Y\cup \{y\} \cup \{z_{n+1}^{-1}z_nz_{n+1}\}_{n\in \omega}$ is $0$.  This is clearly equivalent to having the sum of the exponents of each letter in $X \cup Y\cup \{y\}\cup \{z_n\}_{n\in \omega}$ be $0$, so we have $(\phi(F))' = \phi(F) \cap F'$.  Thus the map $\overline{\phi}: F_a/F_a' \rightarrow F_b/F_b'$ is injective.  Moreover since each element of $X \cup Y\cup \{y\}\cup \{z_n\}_{n\in \omega}$ is conjugate in $F_b$ to an element of $X \cup Y\cup \{y\} \cup \{z_{n+1}^{-1}z_nz_{n+1}\}_{n\in \omega}$ the map $\overline{\phi}$ is onto as well.
\end{proof}

We recall some notions for free products of groups (see \cite[IV.1]{LS}.  Suppose that we have a free product $L_0 *  L_1$.  We call the nontrivial elements in $L_0\cup L_1$ \emph{letters}.  Each element $g\in L_0 *  L_1$ can be expressed uniquely as a product of letters $g = h_0h_1\cdots h_{n-1}$ such that $h_i\in L_0$ if and only if $h_{i+1}\in L_1$ for all $0\leq i<n-1$ (this is the \emph{reduced} or \emph{normal form} of the element).  We call the number $n$ the \emph{length} of $g$, denoted $\Len(g)$.  Thus the identity element has length $0$ and a nontrivial element has length $1$ if and only if it is a letter.  Given a writing of an element of $L_0 * L_1$ as a product of nontrivial elements $h_0\cdots h_{n-1}$ in $L_0 \cup L_1$ it is easy to determine the normal form of the element by taking a consecutive pair $h_ih_{i+1}$ for which $h_i, h_{i+1}$ are both in $L_0$ or both in $L_1$ and performing the group multiplication in the appropriate group.  This either gives the trivial element, in which case we remove the pair $h_ih_{i+1}$ from the expression, or it gives a nontrivial element $g_i$ and we replace $h_ih_{i+1}$ with $g_i$ in the expression.  This process reduces the number of letters in the writing by at least $1$ every time and so the process must eventually terminate, and it terminates at the normal form.  We will generally consider an element of $L_0 * L_1$ as a word (the normal form) in the letters.  For words $w_0$ and $w_1$ in the letters we will use $w_0 \equiv w_1$ to represent that $w_0$ and $w_1$ are the same word letter-for-letter when read from left to right.

We say an element of $L_0 * L_1$ is \emph{cyclically reduced} if its reduced form is either of length $0$ or $1$ or begins with a letter in $L_j$ and ends with a letter in $L_{1-j}$.  A cyclically reduced element is of minimal length in its conjugacy class and if two cyclically reduced elements are conjugate to each other then the normal form of one is a cyclic shift of the other ($w$ is a cyclic shift of $u$ if we can write $w$ as a concatenation $w \equiv v_0v_1$ such that $u \equiv v_1v_0$).  Each element $g$ of $L_0 * L_1$ is conjugate to a cyclically reduced element $h$, and we call $\Len(h)$ the \emph{cyclic length} of $g$.

\begin{lemma}\label{spoilfreeprod}  If $\psi: F_a \rightarrow L_0 * L_1$ is a monomorphism such that $\psi(F_1) = \psi(F_a)\cap L_0$ and $\psi(F_0*F_{2, a}) = \psi(F_a) \cap L_1$ then there does not exist a monomorphism $\theta: F_b \rightarrow L_0*L_1$ for which $\theta\circ\phi = \psi$.
\end{lemma}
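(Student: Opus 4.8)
The plan is to assume for contradiction that a monomorphism $\theta\colon F_b\to L_0*L_1$ with $\theta\circ\phi=\psi$ exists, and to mine from the defining relations of $\phi$ an impossible infinite descent of word lengths in $L_0*L_1$. Since $\phi$ is the identity on $F_0*F_1$, the map $\theta$ agrees with $\psi$ there; in particular $a:=\theta(y)=\psi(y)$ lies in $\psi(F_1)=\psi(F_a)\cap L_0$, so $a\in L_0\setminus\{1\}$, and $b_n:=\psi(t_n)$ lies in $\psi(F_0*F_{2,a})=\psi(F_a)\cap L_1$, so $b_n\in L_1\setminus\{1\}$. Writing $w_n:=\theta(z_n)$ and applying $\theta$ to $\phi(t_n)=yz_{n+1}^{-1}z_nz_{n+1}$ gives $b_n=a\,w_{n+1}^{-1}w_nw_{n+1}$, that is
$$w_{n+1}^{-1}w_nw_{n+1}=a^{-1}b_n=:A_n\qquad(n\in\omega).$$
As $a^{-1}\in L_0\setminus\{1\}$ and $b_n\in L_1\setminus\{1\}$, each $A_n$ is cyclically reduced of length $2$, so every $w_n$ is conjugate to a cyclically reduced element of length $2$; thus $w_n$ has cyclic length $2$. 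I also record that $A_n=\theta(y)^{-1}\theta(\phi(t_n))=\theta(z_{n+1}^{-1}z_nz_{n+1})$.

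The crux is the following cancellation estimate in the free product, which I would isolate as a separate lemma: \emph{if $A\in L_0*L_1$ is cyclically reduced of length $2$ and $v$ has cyclic length $2$ with $v\notin\{A,A^{-1}\}$, then $\Len(vAv^{-1})>\Len(v)$.} Geometrically this says that on the Bass--Serre tree of $L_0*L_1$ every length-$2$ cyclically reduced element has translation length $2$ with axis through the vertex fixed by $L_0$, and that conjugating $A$ by a hyperbolic $v$ not sharing the axis of $A$ strictly increases the displacement; the excluded elements $A^{\pm1}$ are precisely the generators of the cyclic stabiliser of $\mathrm{Axis}(A)$. I would prove the inequality by placing $v$ in its conjugated normal form $v=pCp^{-1}$ with $C$ cyclically reduced of length $2$ and tracking the cancellation when the word $pCp^{-1}ApC^{-1}p^{-1}$ is reduced; the delicate point is that this length fails to exceed $\Len(v)$ only when $v$ lies in the cyclic group $\langle A\rangle$, which for an element of cyclic length $2$ forces $v=A^{\pm1}$. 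This estimate is where the genuine combinatorial work lies and is the main obstacle.

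Granting the estimate, the contradiction is quick. Fix $n$ and apply the estimate with $A=A_n$ and $v=w_{n+1}$ (which has cyclic length $2$). Either $\Len(w_n)=\Len(w_{n+1}A_nw_{n+1}^{-1})>\Len(w_{n+1})$, or else $w_{n+1}\in\{A_n,A_n^{-1}\}$. In the latter case, using $A_n=\theta(z_{n+1}^{-1}z_nz_{n+1})$ and $w_{n+1}=\theta(z_{n+1})$ together with the injectivity of $\theta$, we get $z_{n+1}=(z_{n+1}^{-1}z_nz_{n+1})^{\pm1}=z_{n+1}^{-1}z_n^{\pm1}z_{n+1}$ in $F_b$, whence $z_{n+1}=z_n^{\pm1}$; this is impossible since $z_n$ and $z_{n+1}$ are distinct members of a free basis of $F_b$. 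Therefore the first alternative holds for every $n$, giving $\Len(w_0)>\Len(w_1)>\Len(w_2)>\cdots$, an infinite strictly descending sequence of positive integers. This contradiction shows that no such $\theta$ exists.
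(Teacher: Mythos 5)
Your setup is correct and your overall strategy --- deduce $w_n=w_{n+1}A_nw_{n+1}^{-1}$ with $A_n$ cyclically reduced of length $2$, conclude each $w_n$ has cyclic length $2$, and derive an impossible infinite descent of the lengths $\Len(w_n)$ --- is exactly the strategy of the paper, which proves $\Len(z_n)\geq\Len(z_{n+1})+1$ for all $n$. However, the cancellation estimate you isolate as the key lemma, and explicitly defer as ``the main obstacle,'' is false as stated, so there is a genuine gap. Counterexample: take $a\in L_0$ with $a^2\neq 1$ and $b\in L_1\setminus\{1\}$, set $A=ab$ and $v=a^{-1}b^{-1}a^{-1}=a^{-1}A^{-1}$. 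Then $v$ is reduced of length $3$, its cyclic reduction is $b^{-1}a^{-2}$, so $v$ has cyclic length $2$ and $v\notin\{A,A^{-1}\}$; yet $vAv^{-1}=a^{-1}b^{-1}a^{-1}\cdot ab\cdot aba=ba$, so $\Len(vAv^{-1})=2<3=\Len(v)$. Thus even the non-strict inequality fails, and the exceptional set is much larger than the two generators of the axis stabiliser: your geometric heuristic compares $\Len(vAv^{-1})=2+2d(P,v\cdot\mathrm{Axis}(A))$ with $\Len(v)=2+2d(P,\mathrm{Axis}(v))$ (where $P$ is the base edge), and there is no general reason for the first distance to exceed the second.

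What rescues the argument --- and what the paper's long two-case analysis actually exploits --- is information your lemma throws away: $w_{n+1}$ is not an arbitrary element of cyclic length $2$ but is conjugate to the \emph{specific} word $A_{n+1}=\psi(y)^{-1}\psi(t_{n+1})$, so its cyclic reduction must be a cyclic shift of $y^{-1}t_{n+1}$ with prescribed letters (in the counterexample above the cyclic reduction $b^{-1}a^{-2}$ is not of this form); and, crucially, $w_{n+1}\notin\psi(F_a)$ by Lemma \ref{basicfacts}(ii). The paper assumes $\Len(z_0)\leq\Len(z_1)$, extracts the maximal periodic tails $z_1\equiv w(t_0^{-1}y)^k$ (or $w(y^{-1}t_0)^k$), and shows that every way the required cancellation could occur forces either the cyclic reduction of $z_1$ to fail to be a cyclic shift of $y^{-1}t_1$, or $z_1$ to be expressible as a word in $F_a$ --- both contradictions. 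So your reduction to a clean general statement about free products does not go through; the combinatorial work must be done with the specific elements $y,t_n,z_n$ in hand.
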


\begin{proof}  Suppose on the contrary that such a $\theta$ exists.  We will treat $F_a$ as a subgroup of $F_b$ since $\phi$ is a monomorphism and treat $F_a$ and $F_b$ as subgroups of $L_0 * L_1$ such that $F_1 = F_a\cap L_0$ and $F_0*F_{2, a} = F_a \cap L_1$.  We have that $t_n = yz_{n+1}^{-1}z_nz_{n+1}$ for all $n\in \omega$, and so $y^{-1}t_n = z_{n+1}^{-1}z_nz_{n+1}$.  Since $y^{-1}\in L_0$ and $t_n\in L_1$ we see that $y^{-1}t_n$ is cyclically reduced and $\Len(y^{-1}t_n) = 2$.  Therefore for each $n\in \omega$ we know $z_n$ is of cyclic length $2$, so $\Len(z_n) \geq 2$, and any cyclic reduction of $z_n$ must be a cyclic shift of $y^{-1}t_n$.  We claim that $\Len(z_n) \geq \Len(z_{n+1})+1$ for all $n\in \omega$.  This immediately gives $\Len(z_0) \geq \Len(z_n) +n$ for all $n\in \omega$ which is a contradiction.

It remains to prove that $\Len(z_n) \geq \Len(z_{n+1})+1$.  To economize on writing subscripts we will show that $\Len(z_0) \geq \Len(z_1) +1$ and the same proof will work for general $n$ by adding $n$ to the subscripts of $t_0, z_0, t_1, z_1$.  Suppose to the contrary that $\Len(z_0) \leq \Len(z_1)$.  We have $z_1y^{-1}t_0z_1^{-1} = z_0$.  Since $z_1$ is nontrivial it must end with a letter of $L_0$ or a letter of $L_1$.

\textbf{Case A: $z_1$ ends with a letter of $L_0$.}  In this case it must be that $z_1$ ends with $y$ since otherwise we readily see from the reduced form for $z_0 = z_1y^{-1}t_0z_1^{-1}$ that $\Len(z_0) = (2\Len(z_1) + 2) -1 = 2\Len(z_1) +1 > \Len(z_1)$ contrary to our assumption that $\Len(z_0) \leq \Len(z_1)$.  Also, the second-to-last letter of $z_1$ must be $t_0^{-1}$ since otherwise $L(z_0) = (2\Len(z_1) + 2) -3 = 2\Len(z_1) -1>\Len(z_1)$ since $\Len(z_1) \geq 2$.  Thus we may write $z_1$ as a reduced word $z_1 = w(t_0^{-1}y)^k$ with $k \geq 1$ and maximal.  Notice that $w$ is nonempty, for otherwise $z_1 = (t_0^{-1}y)^k$ and $z_0 = y^{-1}t_0$ and therefore $z_0$ and $z_1$ commute instead of generating a free subgroup of rank $2$.  The word $w$ must end with a letter from $L_0$ since $w(t_0^{-1}y)^k$ is reduced.  Moreover $\Len(w) \geq 2$ since otherwise $w = g \in L_0$ and $z_1$ is conjugate to $(t_0^{-1}y)^{k-1}t_0^{-1}yg$ and cyclically reducing this word cannot produce a cyclic shift of $y^{-1}t_1$.  Then the second-to-last letter of $w$ is an element from $L_1$.  If the last letter of $w$ is $y$ then the second-to-last letter of $w$ is not $t_0^{-1}$ by maximality of $k$ and we get

\begin{center}  $\Len(z_0) = (2\Len(z_1) +2) - 4k - 2 - 1 = 2\Len(z_1) -1 - 4k$.
\end{center}

\noindent If, on the other hand, the last letter of $w$ is $g\in L_0 \setminus \{1, y\}$ then we see that

\begin{center}  $\Len(z_0) = (2\Len(z_1) + 2) -4k -1 = 2\Len(z_1) - 4k +1$

\end{center}

\noindent so in either case we know $\Len(z_0) \geq 2\Len(z_1) -1 - 4k$.  Since we are assuming $\Len(z_1)\geq \Len(z_0)$ we have $\Len(z_1)\leq 4k +1$.

Write $z_1 = (y^{-1}t_0)^mu(t_0^{-1}y)^k$ with $m\geq 0$ maximal.  Certainly $u$ is nontrivial since otherwise $z_1$ would commute with $z_0$.  Also $u$ must begin and end with a letter from $L_0$.  If $m>k$ then $\Len(z_1) = 2m +2k +\Len(u) > 4k +1$.  If $k = m$ then since $4k +1 \geq \Len(z_1) = 2m +2k +\Len(u) = 4k + \Len(u)$ we get $\Len(u) = 1$.  Then $z_1$ is conjugate to an element of length $1$, contradicting the fact that $z_1$ is of cyclic length $2$.  Thus $k > m$.

If $k> m+1$ then by maximality of $m$ we conjugate $z_1$ to a word $u(t_0^{-1}y)^{k-m-1}t_0^{-1}y$ where $u$ might or might not start with $y^{-1}$ but if it starts with $y^{-1}$ the second letter of $u$ would not be $t_0$.  Thus whether or not $u$ starts with $y^{-1}$ we know that $u(t_0^{-1}y)^{k-m-1}t_0^{-1}y$ has cyclic length at least $3$ despite being conjugate to $z_1$, a contradiction.  Thus we know precisely that $k = m + 1$ and so $z_1$ is conjugate to $ut_0^{-1}y$.  If $u$ does start with $y^{-1}$, say $u \equiv y^{-1}v$ for some word $v$, then we get $z_1$ conjugate to $vt_0^{-1}$.  Since the cyclic length of $z_1$ is $2$ we know that $v$ is nonempty, must start with a letter from $L_1$ and that letter must not be $t_0$ by the maximality of $m$.  Then $v \equiv tv_0$ with $t\in L_1 \setminus \{1, t_0^{-1}\}$ and $v_0$ begins and ends with a letter in $L_0$.  Thus $z_1$ is conjugate to $v_0(t_0^{-1}t)$ which is a cyclically reduced word.  Since $z_1$ has cyclic length $2$ and cyclic reduction $y^{-1}t_1$ we obtain that $v_0 = y^{-1}$ and $t_0^{-1}t = t_1$.  But now

\begin{center}  $z_1 = (y^{-1}t_0)^{k-1}u(t_0^{-1}y)^k = (y^{-1}t_0)^{k-1}y^{-1}(t_0t_1)y^{-1}(t_0^{-1}y)^k$
\end{center}

\noindent so that $z_1$ is expressed as a product of elements in $F_a$, contradicting Lemma \ref{basicfacts} part (ii).  Therefore $u$ must start with some $g\in L_0 \setminus \{1, y^{-1}\}$, say $y \equiv gv_0$.  Now we conjugate $ut_0^{-1}y \equiv gv_0t_0^{-1}y$ to $v_0t_0^{-1}(yg)$ which is cyclically reduced (by considering $(yg)$ as a single letter in $L_0$).  But $v_0t_0^{-1}(yg)$ cannot possibly be a cyclic shift of $y^{-1}t_1$, regardless of whether $v$ is empty or not.  This finishes the proof of Case A.

\textbf{Case B: $z_1$ ends with a letter of $L_1$.}  The reasoning in this case follows that in the other case more or less and we give the sketch.  Arguing as before we see that $z_1$ must end with $t_0$ and the second-to-last letter must be $y^{-1}$.  Write $z_1 \equiv w(y^{-1}t_0)^k$ with $k\geq 1$ maximal.  As before, $w$ is nonempty.  Again we have $\Len(z_0) \geq 2\Len(z_1) - 1 -4k$ from which $\Len(z_1) \leq 4k + 1$.  Writing $z_1 \equiv (t_0^{-1}y)^mu(y^{-1}t_0)^k$ with $m\geq 0$ maximal we again see that $u$ is nonempty and that $k = m + 1$.  Also, $u$ must begin and end with letters from $L_1$.  Therefore $z_1$ is conjugate to $uy^{-1}t_0$.  If $u \equiv t_0^{-1}v$ for some $v$ then we get $z_1$ conjugate to $vy^{-1}$ and as $m$ was maximal the word $v$ starts with an element of $L_0$ which is not $y$.  Then $v\equiv gv_0$ with $g\in L_0\setminus \{1, y\}$ and $v_0$ nontrivial since $u$, and therefore $v$, must end in a letter from $L_1$.  Conjugating $vy^{-1}$ to the cyclically reduced word $v_0(y^{-1}g)$ we must have that this word is a cyclic shift of $y^{-1}t_1$.  Then $v_0 = t_1$ but $y^{-1}g$ cannot be $y^{-1}$ since $g$ was nontrivial.  Therefore it must be that $u \equiv tv$ for some $t\in L_1 \setminus \{1, t_0\}$.  Conjugating $uy^{-1}t_0 = tvy^{-1}t_0$ to the cyclically reduced word $vy^{-1}(t_0t)$, it must be that this word is a cyclic shift of $y^{-1}t_1$.  Then $v$ is empty and $t_0t = t_1$.  Therefore

\begin{center}  $z_1 = (t_0^{-1}y)^{k-1}u(y^{-1}t_0)^k = (t_0^{-1}y)^{k-1}(t_0^{-1}t_1)(y^{-1}t_0)^k$
\end{center}

\noindent and we have $z_1\in F_a$, which contradicts Lemma \ref{basicfacts} part (ii).  This completes the proof of Case B and of the lemma.
\end{proof}

\end{section}

\begin{section}{Proof of the Main Theorem}\label{Proof}

The groups that we produce for Theorem \ref{maintheorem} will follow the induction used in \cite[Theorem 2.2]{EM}, using Construction \ref{interestinginductioncase} at the key stages.  We will use combinatorial principles which follow from ZFC + V = L to rule out any possible free decomposition while ensuring strong $\kappa$-freeness and free abelianization.  We first review some concepts from set theory.

\begin{definitions}(see \cite{Jec}) Recall that a cardinal number is naturally considered as an ordinal number which cannot be injected into a proper initial subinterval of itself.  A subset $E$ of an ordinal $\alpha$ is \emph{bounded in $\alpha$} if there exists $\beta< \alpha$ which is an upper bound on $E$.  The \emph{cofinality} of an ordinal is the least cardinal $\kappa$ for which there exists an unbounded $E \subseteq \alpha$ of cardinality $\kappa$.  An infinite cardinal $\kappa$ is \emph{regular} if the cofinality of $\kappa$ is $\kappa$.  A subset $C$ of ordinal $\alpha$ is \emph{club} if it is unbounded in $\alpha$ and closed under the order topology in $\alpha$.  The intersection of two club sets in an uncountable regular cardinal is again a club set.  A subset $E$ of ordinal $\alpha$ is \emph{stationary} if it has nonempty intersection with every club subset of $\alpha$.  The intersection of a club set and a stationary set in a regular cardinal is again stationary.  We mention that weakly compact cardinals are inaccessible and it is therefore consistent to assume that a universe of set theory does not contain any (see \cite[Chapters 9, 17]{Jec}).
\end{definitions}

The following is a theorem of Solovay (see \cite{So} or \cite[Theorem 8.10]{Jec}):

\begin{theorem}\label{Solovay}  If $\kappa$ is an uncountable regular cardinal then each stationary subset of $\kappa$ can be decomposed as the disjoint union of $\kappa$ many stationary subsets of $\kappa$.
\end{theorem}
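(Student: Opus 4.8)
The plan is to deduce the splitting from Fodor's pressing-down lemma, which itself rests on the $\kappa$-completeness of the club filter (the intersection of fewer than $\kappa$ club subsets of a regular $\kappa$ is again club). Fix a stationary $S \subseteq \kappa$. Since the limit ordinals below $\kappa$ form a club, I would first discard the successor ordinals and assume every element of $S$ is a limit ordinal. Next I would reduce to the case that all elements of $S$ share one fixed cofinality $\lambda < \kappa$: the map $\alpha \mapsto \operatorname{cf}(\alpha)$ is regressive on the singular limit ordinals of $S$, and for the cardinals of interest this set is stationary. For instance, when $\kappa$ is a successor cardinal every $\alpha \in S$ already has cofinality below $\kappa$, and there are fewer than $\kappa$ possible values, so by $\kappa$-completeness one value is attained on a stationary set. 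Pressing down yields a stationary $S' \subseteq S$ on which $\operatorname{cf}$ is constantly some regular $\lambda < \kappa$, and it suffices to split $S'$.

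For each $\alpha \in S'$ choose a strictly increasing cofinal sequence $\langle b^\alpha_\nu : \nu < \lambda \rangle$ in $\alpha$. The key step is to locate a single index $\nu^* < \lambda$ whose column of values is unbounded in a strong sense: for every $\gamma < \kappa$ the set $A_\gamma = \{\alpha \in S' : b^\alpha_{\nu^*} > \gamma\}$ is stationary. I would establish the existence of such $\nu^*$ by contradiction. If for every $\nu < \lambda$ there were a bound $\gamma_\nu < \kappa$ with $b^\alpha_\nu \le \gamma_\nu$ holding on a club, then since $\lambda < \kappa$ and $\kappa$ is regular the supremum $\gamma^* = \sup_{\nu < \lambda} \gamma_\nu$ lies below $\kappa$; intersecting these fewer than $\kappa$ many clubs would force $\alpha = \sup_{\nu < \lambda} b^\alpha_\nu \le \gamma^*$ for stationarily many $\alpha \in S'$, contradicting that $S'$ is unbounded in $\kappa$.

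Finally, set $g(\alpha) = b^\alpha_{\nu^*}$, a regressive function on $S'$. For each $\gamma < \kappa$, Fodor's lemma applied to $g \upharpoonright A_\gamma$ produces a stationary set on which $g$ is constant, with value $\xi_\gamma > \gamma$ (since $g > \gamma$ throughout $A_\gamma$). As $\gamma$ ranges over $\kappa$ the ordinals $\xi_\gamma$ are unbounded, hence comprise $\kappa$ distinct values because $\kappa$ is regular. The fibers $g^{-1}(\xi_\gamma)$ for these distinct values are pairwise disjoint stationary subsets of $S$; absorbing the remainder $S \setminus \bigcup_\gamma g^{-1}(\xi_\gamma)$ into any one of them yields the desired partition of $S$ into $\kappa$ stationary pieces.

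I expect the main obstacle to be the simultaneous production of $\kappa$ pieces: a lone application of Fodor delivers only one constant stationary fiber, so the real work is in choosing the index $\nu^*$ so that the regressive function $g$ realizes $\kappa$ many distinct values, each on a stationary set. A secondary subtlety is the reduction to a fixed cofinality, which demands more care when $\kappa$ might contain stationarily many inaccessibles; for the successor cardinals relevant to Theorem \ref{maintheorem} this complication does not arise.
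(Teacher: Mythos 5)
The paper does not actually prove this statement; it quotes it as Solovay's theorem with references to \cite{So} and \cite[Theorem 8.10]{Jec}, so there is no in-paper argument to compare against. Judged on its own, your argument is the standard proof of the main case of Solovay's theorem, and that case is handled correctly: the reduction to a fixed cofinality $\lambda<\kappa$, the location of an index $\nu^*$ whose column $\alpha\mapsto b^\alpha_{\nu^*}$ is not bounded on any club (via intersecting the $\lambda<\kappa$ putative bounding clubs), and the repeated application of Fodor to the sets $A_\gamma$ to harvest $\kappa$ pairwise disjoint stationary fibers are all sound.

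The genuine gap is the step you wave at with ``for the cardinals of interest this set is stationary.'' The theorem as stated concerns an arbitrary stationary $S\subseteq\kappa$ for an arbitrary uncountable regular $\kappa$, and when $\kappa$ is Mahlo the set $S$ may consist entirely of regular limit cardinals, so that $\{\alpha\in S:\operatorname{cf}(\alpha)<\alpha\}$ is empty and your reduction to a fixed cofinality never gets off the ground. This is precisely the hard half of Solovay's proof: one passes to a stationary $W\subseteq S$ of regular $\alpha$ with $W\cap\alpha$ non-stationary in $\alpha$, chooses for each such $\alpha$ a club $c_\alpha\subseteq\alpha$ disjoint from $W$ enumerated in type $\alpha$, and reruns your ``unbounded column'' argument with a diagonal intersection $\triangle_{\xi<\kappa}C_\xi$ in place of the plain intersection (which is no longer available since there are now $\kappa$ many indices $\xi$). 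Without that case the statement as printed is not proved. I will note in your favor that the only stationary sets the paper ever splits are subsets of the set $E$ produced by Theorem \ref{Jensen2}, all of whose elements have cofinality $\omega$, so your argument with $\lambda=\omega$ does cover every application made here; but it does not establish the theorem in the generality in which it is stated, and the restriction to successor $\kappa$ you invoke at the end is not among the hypotheses of Theorem \ref{maintheorem} (non--weakly-compact inaccessibles, including Mahlo cardinals, are allowed).
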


We quote Jensen's $\diamond_{\kappa}(E)$ principle (see \cite[Lemma 6.5]{Jen} or \cite[27.16]{Jec}), remark an easy consequence (see \cite[page 97]{EM}) and quote one more result of Jensen (see \cite[Theorem 5.1]{Jen}, \cite[Theorem 1.3]{EM}):

\begin{theorem}\label{Jensen1}  (ZFC + V = L)  If $\kappa$ is an uncountable regular cardinal and $E \subseteq \kappa$ is stationary in $\kappa$ there exists a sequence $\{S_{\alpha}\}_{\alpha\in E}$ such that $S_{\alpha} \subseteq \alpha$ and for any $J\subseteq \kappa$ the set $\{\alpha\in E\mid J\cap \alpha = S_{\alpha}\}$ is stationary in $\kappa$.
\end{theorem}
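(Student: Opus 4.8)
The statement is Jensen's $\diamond_{\kappa}(E)$ principle, and the plan is to build the guessing sequence canonically inside $L$ and then verify it by a condensation argument. The one indispensable tool is G\"odel's condensation lemma: if $M$ is an elementary submodel of a level $L_{\theta}$ (with $\theta$ a suitable limit), then the transitive collapse of $M$ is again a level $L_{\bar{\theta}}$. Throughout I would exploit that the global well-ordering $<_L$ of the constructible universe is uniformly definable over the levels $L_{\theta}$, so that ``$<_L$-least'' computations are absolute between $L$ and its collapsed levels.

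First I would define $\{S_{\alpha}\}_{\alpha\in E}$ by transfinite recursion. Given $S_{\xi}$ for all $\xi\in E\cap\alpha$, I ask whether there is a pair $(S,C)$ with $S\subseteq\alpha$, $C$ club in $\alpha$, and $S\cap\xi\neq S_{\xi}$ for every $\xi\in E\cap C$; if so I let $(S_{\alpha},C_{\alpha})$ be the $<_L$-least such pair, and otherwise set $S_{\alpha}=\emptyset$. The auxiliary clubs $C_{\alpha}$ are carried only to keep the recursion definable from a single ordinal parameter, so that its behavior transfers verbatim to collapsed levels.

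Now suppose the sequence fails to guess. Then there is a counterexample pair $(J,C)$, with $J\subseteq\kappa$, $C$ club in $\kappa$, and $J\cap\alpha\neq S_{\alpha}$ for all $\alpha\in E\cap C$; let $(J,C)$ be the $<_L$-least such pair. I would fix a large regular $\theta$ (say $\theta=\kappa^{+}$) and build a continuous increasing chain of elementary submodels $M_{\alpha}$ of $L_{\theta}$, each of cardinality $<\kappa$, and each containing $\kappa$, $E$, $J$, $C$ and the data defining the recursion. The set $D=\{\alpha<\kappa: M_{\alpha}\cap\kappa=\alpha\}$ is club, hence $D\cap C$ is club, and since $E$ is stationary I may pick $\alpha\in E\cap D\cap C$. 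Collapsing $M_{\alpha}$, condensation yields a level $L_{\bar{\theta}}$ in which $\kappa$ is sent to $\alpha$ and $J,C$ to $J\cap\alpha$ and $C\cap\alpha$. Because the recursion and the $<_L$-least choice are absolute, $L_{\bar{\theta}}$ correctly sees $(J\cap\alpha,\,C\cap\alpha)$ as the $<_L$-least counterexample over the ordinal $\alpha$, which is exactly the pair chosen at stage $\alpha$; hence $S_{\alpha}=J\cap\alpha$. This contradicts $\alpha\in E\cap C$, on which $J\cap\alpha\neq S_{\alpha}$.

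The hard part will be the absoluteness bookkeeping in the final step: one must confirm that the recursion computing $S_{\alpha}$ and the phrase ``$<_L$-least counterexample'' mean the same thing inside $L_{\bar{\theta}}$ as in $L$, which is precisely where condensation and the uniform definability of $<_L$ over the constructible hierarchy are essential. The only role of the stationarity of $E$ is to guarantee that the club $D\cap C$ of closure points of the construction meets $E$, tying the constructibility combinatorics to the hypothesis on $E$.
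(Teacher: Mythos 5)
The paper does not prove this statement at all: it is quoted as a known theorem of Jensen, with references to \cite[Lemma 6.5]{Jen} and \cite[27.16]{Jec}, so there is no internal proof to compare yours against. Your condensation argument is the standard proof of $\diamond_{\kappa}(E)$ in $L$ (essentially the one in the cited sources), and its skeleton is sound: the recursion on the $<_L$-least ``local counterexample'' pair, the $<_L$-least global counterexample $(J,C)$, the club $D$ of ordinals $\alpha$ with $M_{\alpha}\cap\kappa=\alpha$, and the collapse of $M_{\alpha}$ for $\alpha\in E\cap D\cap C$ giving $S_{\alpha}=J\cap\alpha$, contradicting $\alpha\in E\cap C$. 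The points you defer to ``bookkeeping'' are exactly the ones that need care, and they do go through: since every subset of $\kappa$ lies in $L_{\kappa^{+}}$ (by GCH in $L$), the global minimization can be carried out over $L_{\kappa^{+}}$; the collapsed model $L_{\bar{\theta}}$ is an initial segment of the $<_L$-ordering, and the property ``$(S,C)$ is a counterexample at $\alpha$ for the sequence restricted to $\alpha$'' is absolute for elements of $L_{\bar{\theta}}$, so the $<_L$-least pair computed there agrees with the one computed in $L$; and $C\cap\alpha$ is club in $\alpha$ because $C\in M_{\alpha}$ and $M_{\alpha}\cap\kappa=\alpha$ force $\alpha$ to be a limit point of $C$. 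The only cosmetic blemish is that the recursion as written is vacuous at successor ordinals of $E$ (where ``club in $\alpha$'' degenerates), but one may harmlessly restrict attention to limit $\alpha$ since the limit ordinals form a club.
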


\begin{remark}\label{twofromone}  From a sequence given by Theorem \ref{Jensen1} one obtains a sequence of ordered pairs $\{(T_{\alpha}^0, T_{\alpha}^1)\}_{\alpha \in E}$ for which $T_{\alpha}^0, T_{\alpha}^1 \subseteq \alpha$ and given any subsets $L_0, L_1\subseteq \kappa$ the set $\{\alpha\in E\mid L \cap \alpha = T_{\alpha}^0\text{ and }L_1 \cap \alpha = T_{\alpha}^1\}$ is stationary in $\kappa$.  To see this we give the product $\{0, 1\}\times \kappa$ the lexicographic order.  There is an order isomorphism $f: \{0, 1\}\times \kappa \rightarrow \kappa$ under which $f(0, \alpha) = \alpha$ for every limit ordinal $\alpha<\kappa$.  Let $f_i:\kappa \rightarrow \kappa$ be given by $f_i(\alpha) = f(i, \alpha)$ for each $i\in \{0, 1\}$, so $f_0$ has disjoint image from $f_1$.  Notice that for each limit ordinal $\alpha<\kappa$ and $X\subseteq \kappa$ we have $f_i(X\cap \alpha) = f_i(X) \cap \alpha$.  Letting $E_0$ be the intersection of $E$ with the set of limit ordinals below $\kappa$ we have that $E_0$ is stationary as the intersection of a stationary with a club.  For $\alpha\in E_0$ define $T_{\alpha}^0 = f_0^{-1}(S_{\alpha})$ and $T_{\alpha}^1 = f_1^{-1}(S_{\alpha})$.  Given $L_0, L_1\subseteq \kappa$ we let $J = f_0(L_0) \sqcup f_1(L_1)$ and notice that

\begin{center}  $\{\alpha\in E_0\mid L_0\cap \alpha = T_{\alpha}^0 \text{ and }L_1\cap \alpha = T_{\alpha}^1\}$

$= \{\alpha\in E_0\mid f_0(L_0\cap \alpha) \sqcup f_1(L_1\cap \alpha) = S_{\alpha}\}$

$= \{\alpha \in E_0\mid (f_0(L_0) \cap \alpha) \sqcup (f_1(L_1)\cap \alpha) = S_{\alpha}\}$

$= \{\alpha \in E_0\mid J \cap \alpha = S_{\alpha}\}$

\end{center}

\noindent is stationary in $\kappa$.  Letting $T_{\alpha}^0 = \emptyset = T_{\alpha}^1$ for $\alpha \in E \setminus E_0$ gives the desired sequence of ordered pairs. 

\end{remark}

\begin{theorem}\label{Jensen2} (ZFC+ V = L)  If $\kappa$ is an uncountable regular cardinal which is not weakly compact then there is a stationary subset $E \subseteq \kappa$ for which each element of $E$ is a limit ordinal of cofinality $\omega$ and for each limit ordinal $\alpha<\kappa$ the set $E\cap \alpha$ is not stationary in $\alpha$.
\end{theorem}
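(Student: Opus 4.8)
The plan is to split on whether $\kappa$ is a successor cardinal or an inaccessible cardinal. Since $V = L$ implies GCH, every regular cardinal that is not a successor is strongly inaccessible, and a weakly compact cardinal is in particular inaccessible, so these two cases are exhaustive. Before splitting I would record one simplifying observation that handles the bulk of the reflection points for free: if $E$ consists of limit ordinals and $\alpha < \kappa$ is a limit with $\mathrm{cf}(\alpha) = \omega$, then any strictly increasing cofinal $\omega$-sequence of \emph{successor} ordinals in $\alpha$ has closed unbounded range in $\alpha$ and is disjoint from $E$; hence $E \cap \alpha$ is automatically nonstationary in $\alpha$. So it suffices to build a stationary $E \subseteq \{\alpha < \kappa : \mathrm{cf}(\alpha) = \omega\}$ for which $E \cap \alpha$ is nonstationary only at the $\alpha$ of uncountable cofinality; the cofinality-$\omega$ reflection points cost nothing.

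For the successor case, write $\kappa = \mu^+$. If $\mu = \omega$ then every limit ordinal below $\kappa = \aleph_1$ already has cofinality $\omega$ and there are no ordinals of uncountable cofinality below $\kappa$, so $E = \{\alpha < \aleph_1 : \alpha \text{ is a limit}\}$ works with no further argument. If $\mu$ is uncountable I would produce a non-reflecting stationary subset of $\{\alpha < \kappa : \mathrm{cf}(\alpha) = \omega\}$ by a ladder-system argument: using a coherent sequence $\langle C_\delta : \delta < \kappa \text{ limit}\rangle$ of clubs $C_\delta \subseteq \delta$ with $\mathrm{ot}(C_\delta) \le \mu$ and $C_\beta = C_\delta \cap \beta$ for limit points $\beta$ of $C_\delta$ (such a sequence is supplied in $L$ by Jensen's $\square_\mu$), the coherence forces the order types $\mathrm{ot}(C_\delta)$ to strictly increase along limit points, which prevents a stationary set of cofinality-$\omega$ points of fixed small order type from reflecting. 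One peels off such a stationary $E$ so that each $C_\delta$ at an uncountable-cofinality $\delta$ is a club in $\delta$ missing $E$. This is the routine part, and in the examples most relevant here (e.g.\ $\kappa = \aleph_1, \aleph_2, \aleph_{\omega+1}$) it is either trivial or a standard computation.

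The substantive case, and the one I expect to be the main obstacle, is when $\kappa$ is inaccessible but not weakly compact; here $V = L$ is essential. The idea is to exploit the equivalence, valid in $L$, between weak compactness of $\kappa$ and the reflection of all stationary subsets of $\kappa$: weak compactness always implies reflection in ZFC, so the failure of weak compactness must, in $L$, be converted into an explicit nowhere-reflecting stationary set. Concretely I would use that a non-weakly-compact inaccessible $\kappa$ fails to be $\Pi^1_1$-indescribable, fix a predicate $A \subseteq V_\kappa$ and a $\Pi^1_1$ sentence holding of $(V_\kappa, \in, A)$ but of no proper initial segment $(V_\alpha, \in, A \cap V_\alpha)$, and then let $E$ be the set of $\alpha \in \{\alpha < \kappa : \mathrm{cf}(\alpha) = \omega\}$ at which a suitable fine-structural ``local failure'' condition holds. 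The condensation properties of $L$ drive both halves: a $\diamond$-style condensation argument shows $E$ is stationary, while the global failure of the reflecting sentence shows that at every $\alpha$ of uncountable cofinality one can thin $E \cap \alpha$ out of a club, so $E$ reflects nowhere.

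The delicate point throughout the inaccessible case is arranging a single set $E$ that is simultaneously stationary (a largeness demand met by condensation and $\diamond$) and nowhere reflecting (a smallness-at-every-uncountable-cofinality-point demand met by the indescribability failure); balancing these two opposing requirements is exactly the content of Jensen's fine-structure theorem, which is why I would ultimately appeal to that machinery rather than reprove it in full.
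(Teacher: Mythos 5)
The paper does not actually prove this statement: Theorem~\ref{Jensen2} is quoted as a black box, with citations to Jensen's fine-structure paper and to Eklof--Mekler, so there is no internal argument to compare yours against. Your outline is essentially the standard modern route to this result and is correct as far as it goes. The opening observation is right: any limit $\alpha$ of cofinality $\omega$ admits a club of successor ordinals, so a set of limit ordinals can never reflect at such $\alpha$, and the only work is at points of uncountable cofinality. The case $\kappa=\aleph_1$ is then immediate (and needs no set theory beyond ZFC), and the case $\kappa=\mu^+$ with $\mu$ uncountable is correctly handled by $\square_\mu$ (which Jensen proved holds in $L$): some fixed order type $\eta$ of $C_\delta$ occurs stationarily often among cofinality-$\omega$ points because the nonstationary ideal on $\mu^+$ is $\mu^+$-complete, and coherence makes $\mathrm{ot}(C_\beta)$ strictly increase along $\beta\in\mathrm{acc}(C_\alpha)$, so at most one point of $E$ lies in that club --- your phrasing that ``$C_\delta$ misses $E$'' should really say that $\mathrm{acc}(C_\delta)$ minus an initial segment misses $E$, but the idea is right. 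The one place your proposal falls short of a proof is the inaccessible non-weakly-compact case: you describe the correct shape of Jensen's argument (failure of $\Pi^1_1$-indescribability plus condensation yields a non-reflecting stationary subset of any prescribed stationary set, here $\{\alpha:\mathrm{cf}(\alpha)=\omega\}$) but explicitly defer to his fine-structure machinery rather than carry it out. Since that is exactly the theorem the paper itself cites wholesale, your treatment is no less complete than the paper's and is more informative in the successor case; just be aware that the inaccessible case remains an appeal to Jensen rather than a proof.
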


\begin{proof}(of Theorem \ref{maintheorem})  Let $\kappa$ be as in the hypotheses of Theorem \ref{maintheorem}.  We inductively define a group structure on $\kappa$ and this will serve as our group $G$.  Let $\gamma_{0} = \omega$, $\gamma_{\alpha + 1} = \gamma_{\alpha} + \gamma_{\alpha}$ and $\gamma_{\beta} = \bigcup_{\alpha<\beta} \gamma_{\alpha}$ for limit ordinal $\beta<\kappa$.  The set $\{\gamma_{\alpha}\}_{\alpha<\kappa}$ is obviously a club set in $\kappa$.  We will define a group structure on each $\gamma_{\alpha}$ so that $\gamma_{\alpha}$ will be a subgroup of $\gamma_{\beta}$ whenever $\alpha<\beta$ and thus the group structure on $\kappa = \bigcup_{\alpha<\kappa}\gamma_{\alpha}$ will be well defined.

Select stationary $E_0 \subseteq \kappa$ satisfying the conclusion of Theorem \ref{Jensen2} and such that $\kappa\setminus E_0$ is stationary in $\kappa$ (using Theorem \ref{Solovay}).  It is easy to check that $E = \{\gamma_{\alpha}\}_{\alpha \in E_0}$ is also stationary in $\kappa$.  Let $\{(T_{\gamma_{\alpha}}^0, T_{\gamma_{\alpha}}^1)\}_{\gamma_{\alpha}\in E}$ be a sequence as in Remark \ref{twofromone}.

The following properties will hold on the groups for all $\alpha<\beta<\kappa$:

\begin{enumerate}[(i)]

\item $\gamma_{\alpha}$ is free of infinite rank;

\item $\gamma_{\alpha}$ is a proper subgroup of $\gamma_{\beta}$;

\item $\gamma_{\alpha}$ is a free factor of $\gamma_{\beta}$ if and only if $\gamma_{\alpha}\notin E$;

\item $\gamma_{\alpha}' = \gamma_{\beta}' \cap \gamma_{\alpha}$.
\end{enumerate}

We let $\gamma_0$ be free of countably infinite rank.  Suppose that we have defined the group structure on $\gamma_{\alpha}$ for all $\alpha<\beta <\kappa$ such that the above conditions hold.  If $\beta = \delta + 1$ and $\gamma_{\delta}\notin E$ then we give $\gamma_{\beta}$ the group structure obtained by any bijection $f: \gamma_{\beta} \rightarrow \gamma_{\delta} * \mathbb{Z}$ such that $f\upharpoonright \gamma_{\alpha}$ is the identity map.  Such a bijection exists since $|\gamma_{\beta}\setminus \gamma_{\delta}| = |\gamma_{\delta}|$.  Conditions (i), (ii) and (iv) obviously hold.  Notice also that for $\alpha<\beta$ we have $\gamma_{\alpha}$ a free factor of $\gamma_{\beta}$ if and only if $\gamma_{\alpha}$ is a free factor of $\gamma_{\delta}$ (this uses Lemma \ref{Kurosh}) and so (iii) also holds.

If $\beta = \delta + 1$ and $\gamma_{\delta}\in E$ then we consider two subcases.  Firstly suppose that there exists a strictly increasing sequence $\{\alpha_n\}_{n\in \omega}$ for which

\begin{itemize}

\item $\gamma_{\alpha_n} \notin E$;

\item $\bigcup_{n\in\omega}\alpha_n = \delta$;

\item $\gamma_{\alpha_n} = (\gamma_{\alpha_n}\cap T_{\gamma_{\delta}}^0 ) *  (\gamma_{\alpha_n}\cap T_{\gamma_{\delta}}^0)$ with both free factors nontrivial for all $n\in\omega$.

\end{itemize}

\noindent  From this it follows immediately that $\gamma_{\delta} = (\gamma_{\delta}\cap T_{\gamma_{\delta}}^0 ) *  (\gamma_{\delta}\cap T_{\gamma_{\delta}}^1)$.  Since $\{\gamma_{\alpha_n}\}_{n\in\omega}$ is a strictly increasing sequence of sets we know for each $n\in \omega$ that it is either the case that $\gamma_{\alpha_{n+1}}\cap T_{\gamma_{\delta}}^0 \supsetneq \gamma_{\alpha_n}\cap T_{\gamma_{\delta}}^0$ or that  $\gamma_{\alpha_{n+1}}\cap T_{\gamma_{\delta}}^1 \supsetneq \gamma_{\alpha_n}\cap T_{\gamma_{\delta}}^1$.  By choosing a subsequence we can assume without loss of generality that $\gamma_{\alpha_{n+1}}\cap T_{\gamma_{\delta}}^0 \supsetneq \gamma_{\alpha_n}\cap T_{\gamma_{\delta}}^0$ for all $n\in \omega$.  Since each $\gamma_{\alpha_n}\notin E$ we know by our induction that $\gamma_{\alpha_n}$ is a free factor of $\gamma_{\delta}$ and also of $\gamma_{\alpha_{n+1}}$.  Then by Lemma \ref{Kurosh} we know that $\gamma_{\alpha_n}\cap T_{\gamma_{\delta}}^0$ is a free factor of $\gamma_{\delta}\cap T_{\gamma_{\delta}}^0$ and of $\gamma_{\alpha_{n+1}}\cap T_{\gamma_{\delta}}^0$.  Inductively select a free basis $Q$ of $\gamma_{\delta}\cap T_{\gamma_{\delta}}^0$ such that $Q\cap \gamma_{\alpha_n}\cap T_{\gamma_{\delta}}^0$ is a free generating set for $\gamma_{\alpha_n}\cap T_{\gamma_{\delta}}^0$ for each $n\in \omega$.  For each $n\in \omega$ select $t_n \in Q\cap \gamma_{\alpha_{n+1}}\cap T_{\gamma_{\delta}}^0\setminus\gamma_{\alpha_n}$ and let $X = Q \setminus \{t_n\}_{n\in \omega}$.  Let $\{y\}\sqcup Y$ be a free generating set for $\gamma_{\delta}\cap T_{\gamma_{\delta}}^1$.  Let $\gamma_{\beta}$ be a group freely generated by $X \cup \{z_n\}_{n\in \omega} \cup \{y\} \cup Y$ such that the inclusion map $\iota: \gamma_{\delta} \rightarrow \gamma_{\beta}$ is the map $\phi$ from Construction \ref{interestinginductioncase}.

Certainly conditions (i) and (ii) hold, and condition (iv) holds by Lemma \ref{basicfacts} (iv).  Also we know $\gamma_{\delta}$ is not a free factor of $\gamma_{\beta}$ by Lemma \ref{basicfacts} (iii).  Notice that for each $n\in \omega$ we have $\langle X \cup \{t_0, \ldots, t_{n-1}\}\cup \{y\} \cup Y\rangle$ is a free factor of $\gamma_{\beta}$ by Lemma \ref{basicfacts} (iii).  For each $n\in \omega$ we know $\gamma_{\alpha_n}$ is a free factor of $\langle X \cup \{t_0, \ldots, t_{n-1}\}\cup \{y\} \cup Y\rangle$ and for each $\alpha<\delta$ there exists $n\in \omega$ for which $\alpha<\alpha_n$.  It follows by Lemma \ref{Kurosh} that for $\alpha<\delta$ we have $\gamma_{\alpha}$ a free factor of $\gamma_{\beta}$ if and only if $\gamma_{\alpha}\notin E$, and condition (iii) holds.

On the other hand suppose $\beta = \delta + 1$ and $\gamma_{\delta}\in E$ and no such increasing sequence $\{\alpha_n\}_{n\in \omega}$ exists.  Since $\delta \in E_0$ is of cofinality $\omega$ and $E_0\cap \delta$ is not stationary in $\delta$ we may select a strictly increasing sequence $\alpha_n \notin E_0$ such that $\bigcup_{n\in \omega}\alpha_n = \delta$.  Then $\gamma_{\alpha_n}\notin E$.  As each $\gamma_{\alpha_n}$ is a free factor of $\gamma_{\delta}$ and the $\alpha_n$ are strictly increasing we may select by induction a free generating set $Q$ for $\gamma_{\delta}$ such that $Q\cap \gamma_{\alpha_n}$ is a free generating set for $\gamma_{\alpha_n}$.  Pick $y\in \gamma_{\alpha_0}\cap Q$ and $t_n\in \gamma_{\alpha_{n+1}} \cap Q \setminus \gamma_{\alpha_n}$ and letting $X = \emptyset$ and $Y = Q \setminus (\{t_n\}_{n\in \omega} \cup \{y\})$ we let $\gamma_{\beta}$ be a group freely generated by $X \cup \{z_n\}_{n\in \omega} \cup \{y\} \cup Y$ such that the inclusion map $\iota: \gamma_{\delta} \rightarrow \gamma_{\beta}$ is the map $\phi$ from Construction \ref{interestinginductioncase}.  The check that the induction conditions still hold is as in the other subcase.

When $\beta<\kappa$ is a limit ordinal the binary operation on $\gamma_{\beta} = \bigcup_{\alpha<\beta}\gamma_{\alpha}$ is defined by that on the $\gamma_{\alpha}$ for $\alpha<\beta$.  By how $E_0$ was chosen we know $E_0 \cap \beta$ is not stationary in $\beta$ and so we select a club set $C \subseteq \beta$ for which $C \cap E_0 = \emptyset$.  By induction we know that for $\alpha, \delta \in C$ with $\alpha<\delta$ we have $\gamma_{\alpha}$ is a proper free factor of $\gamma_{\delta}$ and since $C$ is closed we have $\gamma_{\delta} = \bigcup_{\alpha<\delta, \alpha\in C}\gamma_{\alpha}$ for any $\delta\in C$ which is a limit under the ordering of $\kappa$ restricted to $C$.  Then by induction on $C$ we can select a free generating set $Q$ for $\gamma_{\beta} = \bigcup_{\alpha\in C} \gamma_{\alpha}$ for which $Q \cap \gamma_{\alpha}$ is a free generating set for $\gamma_{\alpha}$ for each $\alpha \in C$.  Then (i) and (ii) hold.  For $\alpha<\beta$ it is clear by Lemma \ref{Kurosh} that $\gamma_{\alpha}$ is a free factor of $\gamma_{\beta}$ if and only if $\gamma_{\alpha}$ is a free factor of $\gamma_{\delta}$ for some $\delta\in C$ with $\delta > \alpha$, and since $\gamma_{\delta}$ is a free factor of $\gamma_{\beta}$ for each $\delta\in C$ condition (iii) holds.  Condition (iv) follows by induction since $\gamma_{\beta}' = \bigcup_{\alpha<\beta} \gamma_{\alpha}'$.  This completes the construction of the group structure on $\kappa$.

We verify that conditions (1)-(3) of the statement of Theorem \ref{maintheorem} hold.  Imagine for contradiction that $\kappa = L_0 * L_1$ with $L_0, L_1$ nontrivial subgroups of $\kappa$.  Letting $C = \{\alpha<\kappa\mid \gamma_{\alpha} = (\gamma_{\alpha}\cap L_0) * (\gamma_{\alpha} \cap L_1)\}$ it is straightforward to verify that $C$ is club in $\kappa$.  Since the set $\kappa \setminus E_0$ is stationary in $\kappa$ we know $D =  C \setminus E_0$ is stationary and therefore unbounded in $\kappa$.  Then the closure $\overline{D}$ is club in $\kappa$, and so is $\{\gamma_{\alpha}\}_{\alpha\in \overline{D}}$.  Then there exists $\gamma_{\delta} \in E$ with $\delta\in \overline{D}$ and $L_0 \cap \gamma_{\delta} = T_{\gamma_{\delta}}^0$ and $L_1 \cap \gamma_{\delta} = T_{\gamma_{\delta}}^1$.  Then $\delta \in E_0$ and so $\delta\in E_0\cap \overline{D}$.  As $\delta \in E_0$ we know that $\delta$ has cofinality $\omega$ in $\kappa$.  Certainly $\delta \notin D = C \setminus E_0$ and so there exists a strictly increasing sequence $\{\alpha_n\}_{n\in \omega}$ with $\alpha_n \in C \setminus E_0$ such that $\bigcup_{n\in \omega} \alpha_n = \delta$.  Then 

\begin{center}
$\gamma_{\alpha_n} = (\gamma_{\alpha_n}\cap L_0) * (\gamma_{\alpha_n} \cap L_1)$

$=  (\gamma_{\alpha_n}\cap T_{\gamma_{\delta}}^0) * (\gamma_{\alpha_n} \cap T_{\gamma_{\delta}}^1)$.
\end{center}

\noindent By our construction we know that $\gamma_{\delta}$ includes into $\gamma_{\delta +1}$ in such a way that $\gamma_{\delta + 1}$ is not a subgroup of $L_0 * L_1$ (using Lemma \ref{spoilfreeprod}), and this is a contradiction.  Thus $\kappa$ is freely indecomposable and we have part (1).

For part (2) we let $X\subseteq \kappa$ with $|X|<\kappa$.  By the regularity of $\kappa$ select $\alpha<\kappa$ large enough that $\gamma_{\alpha}\supseteq X$.  Notice that $\gamma_{\alpha+1}\notin E$.  Any subgroup $H$ of $\kappa$ with $|H|<\kappa$ and $H\geq \gamma_{\alpha +1}$ satisfies $H \leq \gamma_{\beta}$ for some $\beta >\alpha$ by the regularity of $\kappa$.  Since $\gamma_{\alpha+1}$ is a free factor of $\gamma_{\beta}$ by our construction, we have by Lemma \ref{Kurosh} that $\gamma_{\alpha + 1}$ is a free factor of $H$.  Thus $\gamma_{\alpha +1}$ is $\kappa$-pure and the group $\kappa$ is strongly $\kappa$-free and we have verified (2).

For part (3) we notice that since $\gamma_{\alpha}' = \gamma_{\alpha}\cap \gamma_{\beta}'$ for each $\alpha < \beta$ and since $\kappa' = \bigcup_{\beta<\kappa}\gamma_{\beta}'$ the equality $\gamma_{\alpha}' = \gamma_{\alpha}\cap\kappa'$ holds for all $\alpha<\kappa$.  In particular the homomorphism induced by the inclusion map $\gamma_{\alpha}/\gamma_{\alpha}' \rightarrow \kappa/\kappa'$ is injective and so the abelianization of $\kappa$ is the increasing union of free abelian subgroups $\gamma_{\alpha}/\gamma_{\alpha}'$.  In our construction, when $\beta = \delta + 1$ and $\gamma_{\delta} \in E$ the map induced by inclusion $\gamma_{\delta}/\gamma_{\delta}' \rightarrow \gamma_{\beta}/\gamma_{\beta}'$ is an isomorphism by Lemma \ref{basicfacts} (iv).  When $\beta = \delta+1$ and $\gamma_{\delta} \notin E$ we have $\gamma_{\delta}/\gamma_{\delta}'$ as a proper direct summand of $\gamma_{\beta}/\gamma_{\beta}'$.  As well we have $\gamma_{\beta}/\gamma_{\beta}' = \bigcup_{\alpha<\beta}\gamma_{\alpha}/\gamma_{\alpha}'$ for limit $\beta<\kappa$.  Thus we may inductively select a free abelian basis for the abelianization of $\kappa$, and the free basis will be of cardinality $\kappa$ since there are $\kappa$ many $\beta$ for which $\beta = \delta + 1$ with $\gamma_{\delta}\notin E$.  We have verified (3) and finished the proof of the theorem.
\end{proof}

\end{section}


\begin{thebibliography}{A}

\bibitem{B} C. Bitton, \emph{The abelianization of almost free groups}, Proc. Amer. Math. Soc. 129 (2000), 1799-1803.

\bibitem{Esmall} P. Eklof, \emph{On the existence of $\kappa$-free abelian groups}, Proc. Amer. Math. Soc. 47 (1975), 65-72.

\bibitem{E} P. Eklof, \emph{Methods of logic in abelian group theory}, Abelian Group Theory, Springer Verlag Lecture Notes in Mathematics 616 (1977), 251-269.

\bibitem{EM} P. Eklof, A. Mekler, \emph{On constructing indecomposable groups in L}, J. Algebra 49 (1977), 96-103.

\bibitem{EMLarge} P. Eklof, A. Mekler, \emph{Almost free modules, set-theoretic methods}, North-Holland, Amsterdam, 1990. 

\bibitem{G} K. G\"odel, \emph{The consistency of the axiom of choice and of the generalized continuum hypothesis}, Ann. Math. Studies 3 (1940).

\bibitem{H1} G. Higman, \emph{Almost free groups}, Proc. London Math. Soc. 1 (1951), 284-290.

\bibitem{H2} G. Higman, \emph{Some countably free groups}, Proceedings of the Singapore conference on group theory, Walter de Gruyter, Berlin, 1990, 129-150.

\bibitem{Jec} T. Jech, Set Theory: The Third Millenium Edition, Revised and Expanded, Springer-Verlag, 2006.

\bibitem{Jen} R. Jensen, \emph{The fine structure of the constructible hierarchy}, Ann. Math. Logic 4 (1972), 229-308.

\bibitem{LS} R. Lyndon, P. Schupp, Combinatorial Group Theory, Springer-Verlag, New York 1977, 2001.

\bibitem{MagSh} M. Magidor, S. Shelah, \emph{When does almost free imply free? (for groups, transversals, etc.)}, J. Amer. Math. Soc. 7 (1994), 769-830.

\bibitem{M} A. Mekler, \emph{How to construct almost free groups}, Can. J. Math. 32 (1980), 1206-1228.

\bibitem{N} T. Nishinaka, \emph{Uncountable locally free groups and their group rings}, J. Group Theory 21 (2018), 101-105.

\bibitem{Sh} S. Shelah, \emph{A compactness theorem for singular cardinals, free algebras, Whitehead proplem and transversals}, Israel J. Math. 21 (1975), 319-349.

\bibitem{So} R, Solovay, \emph{Real-valued measurable cardinals}, in Axiomatic Set Theory (D. S. Scott, ed.), Proc. Sympos. Pure Math., Vol XIII, Part I Providence, Rhode Island 1971.


\end{thebibliography}
\end{document}